\newtheorem{theorem}{Theorem}[section]
\newtheorem{lemma}[theorem]{Lemma}
\newtheorem{prop}[theorem]{Proposition}
\newtheorem{definition}[theorem]{Definition}
\newtheorem{remark}[theorem]{Remark}
\newtheorem{example}[theorem]{Example}
\def\E{\operatorname{E}}
\def\F{\operatorname{F}}
\def\G{\operatorname{G}}
\def\U{\operatorname{U}}
\def\O{\operatorname{O}}
\def\SU{\operatorname{SU}}
\def\SO{\operatorname{SO}}
\def\Sp{\operatorname{Sp}}
\def\PO{\operatorname{PO}}
\def\PSO{\operatorname{PSO}}
\def\PSU{\operatorname{PSU}}
\def\Ad{\operatorname{Ad}}
\def\ad{\operatorname{ad}}
\def\Aut{\operatorname{Aut}}
\def\der{\operatorname{der}}
\def\id{\operatorname{id}}
\def\Int{\operatorname{Int}}
\def\PSp{\operatorname{PSp}}
\def\rank{\operatorname{rank}}
\def\Spin{\operatorname{Spin}}
\def\Stab{\operatorname{Stab}}
\newcommand{\fre}{\mathfrak{e}}
\newcommand{\frf}{\mathfrak{f}}
\newcommand{\frg}{\mathfrak{g}}
\newcommand{\frk}{\mathfrak{k}}
\newcommand{\fru}{\mathfrak{u}}
\newcommand{\bbR}{\mathbb{R}}
\begin{document}

\title{Maximal antipodal sets in irreducible compact symmetric spaces}
%\date{August 2020}
\thanks{}

\author{Jun Yu}
\address{BICMR, Peking University, No. 5 Yiheyuan Road, Haidian District, Beijing 100871, China.}
\email{junyu@bicmr.pku.edu.cn}
\keywords{Compact symmetric space, maximal antipodal set, Cartan quadratic morphism, elementary abelian 2-subgroup.}
\subjclass[2010]{22E46, 53C35.}

\begin{abstract}
We give an explicit classification of maximal antipodal sets in any irreducible compact symmetric space except for
spin groups and half spin groups, and some quotient symmetric spaces associated to them.
\end{abstract}

\maketitle

\setcounter{tocdepth}{1}

\tableofcontents

%%%%%%%%%%%%%%%%%%%%%%%%%%%%%%%%%%%%%%%%%%%%%%%%%%%%%%%%%%%%%%%%%%%%%%%%%%%%%%
\section{Introduction}
%%%%%%%%%%%%%%%%%%%%%%%%%%%%%%%%%%%%%%%%%%%%%%%%%%%%%%%%%%%%%%%%%%%%%%%%%%%%%%%%%

A closed Riemannian manifold $M$ is said to be a compact symmetric space if for any point $x\in M$, there is
a Riemannian isometry $s_{x}: M\rightarrow M$ such that: (i) $s_{x}=x$; (ii) the tangent map $(s_{x})_{\ast}:
T_{x}(M)\rightarrow T_{x}(M)$ is $-1$. For any compact symmetric space $M$, it is known that there exists a
connected compact Lie group $G$ and an involutive automorphism $\theta$ of it such that $M=G/G^{\theta}$
(cf. \cite[Thm. 4.6, p. 185]{Loos1}). We call a nonempty subset $X$ of $M$ an {\it antipodal set} if
\[s_{x}(y)=y\ (\forall x,y\in X).\] An antipodal set must be a finite set since it is a discrete set and
$M$ is compact. We call an antipodal set a {\it maximal antipodal set} if it is not properly contained in
any other antipodal set. In \cite{Chen-Nagano}, Chen and Nagano introduced and calculated the invariant
2-number $\#_{2}(M)$ of a compact symmetric space, which is the maximal
cardinality of antipodal sets in a compact symmetric space $M$. After this paper, there are many studies on maximal
antipodal sets. Particularly, Tanaka and Tasaki made the classification of maximal antipodal sets for some kinds
of compact symmetric spaces (\cite{Tanaka}, \cite{Tanaka-Tasaki1}, \cite{Tanaka-Tasaki2}, \cite{Tanaka-Tasaki3}):
symmetric $R$-spaces, some compact classical Lie groups, etc. The readers may consult \cite{Chen} for an excellent
survey on the study of 2-numbers and maximal antipodal sets. In this paper we deduce the classification of maximal
antipodal sets from the classification of elementary abelian 2-subgroups in compact Lie groups (\cite{Yu}).

Let $G$ be a connected compact Lie group, and $\theta$ be an involutive automorphism of it. Put $M=G/G^{\theta}$.
Set $\bar{G}=G\rtimes\langle\bar{\theta}\rangle,$ where $\bar{\theta}^{2}=1$ and $\Ad(\bar{\theta})|_{G}=\theta.$
Write $C_{\bar{\theta}}=\{g\bar{\theta}g^{-1}: g\in G\}.$ The {\it Cartan quadratic morphism} (cf. \cite{Chen})
is a map $\phi: G/G^{\theta}\rightarrow G$ defined by \[\phi(gG^{\theta})=g\theta(g)^{-1},\ \forall g\in G.\]
Let $X$ be a subset of $M$ containing the origin $o=eG^{\theta}\in M$. Write \[\phi(X)=\{\phi(x): x\in X\}
\subset G,\] \[F_{1}(X)=\langle\phi(X)\rangle\subset G\] and \[F_{2}(X)=\langle\phi(X),\bar{\theta}\rangle
\subset\bar{G}.\] Using the Cartan quadratic morphism, we show a correspondence between maximal antipodal sets
in $G/G^{\theta}$ and certain elementary abelian 2-subgroups of $\bar{G}$.

\begin{theorem}\label{T1}
Let $X$ be a subset of $M=G/G^{\theta}$ containing the origin $o=eG^{\theta}\in M$. Then $X$ is a maximal antipodal
set if and only if $F_{2}(X)$ is a maximal element in the set of elementary abelian $2$-subgroups of $\bar{G}$
which are generated by elements in $C_{\bar{\theta}}$, and \begin{equation*}X=\{x\in M: \phi(x)\in F_{2}(X)
\cap C_{\bar{\theta}}\bar{\theta}^{-1}\}.\end{equation*}
\end{theorem}

We call a compact symmetric space ``irreducible'' if it is not isogenous to the product of two positive-dimensional
compact symmetric spaces. With an explicit list of irreducible compact symmetric spaces, we show a precise
classification of maximal antipodal sets in most of them using Theorem \ref{T1}. The only cases which haven't been
treated are spin groups and half spin groups, and some quotient symmetric spaces of them.

%\smallskip

The content of this paper is organized as follows. In Proposition \ref{P:characterization2}, we give a criterion of
antipodal sets using the Cartan quadratic morphism $\phi: G/G^{\theta}\rightarrow G$. With it, we show Theorem
\ref{T1}. In Subsection \ref{SS:Weyl}, we study Weyl groups of maximal antipodal sets. In Section
\ref{S:symmetric space}, we give a precise list of irreducible compact symmetric spaces that are not of group form.
In Section \ref{S:classification}, we present an explicit classification of maximal antipodal sets in most irreducible
compact symmetric spaces. The remaining ones which haven't been treated are listed in Subsection \ref{SS:open}.
In Subsection \ref{S:anitipodal2}, we illustrate how to classify $G^{\theta}$ orbits in the fixed point set of $s_{0}$,
which are related to polars defined by Chen and Nagano.

\smallskip

\noindent{\it Notation and conventions.} In this paper a compact Lie group $G$ is said to be ``simple" if its Lie
algebra is a non-abelian simple Lie algebra. Let $\E_6^{sc}$ (or $\E_6$) denote a connected and simply-connected
compact simple Lie group of type $\E_6$; let $\E_{6}^{ad}$ denote a connected adjoint type compact simple Lie group
of type $\E_6$. Similarly, we have the notations $\E_7^{sc}$, $\E_7$, $\E_7^{ad}$, $\E_8$, $\F_4$, $\G_2$. The last
three are connected compact Lie groups which are both simply-connected and of adjoint type.

Write \[J_{m}=\left(\begin{array}{cc} 0&I_{m}\\-I_{m}&0\\\end{array}\right),\quad I_{p,q}=\left(\begin{array}{cc}-I_{p}
&0\\0&I_{q}\\\end{array}\right).\] In $\Spin(2n)$, write $c=c_{n}=e_{1}\cdots e_{2n}$, where $\{e_1,e_2,\dots,e_{2n}\}$
is a standard normal basis of the Euclidean space based on which $\Spin(2n)$ is defined. Involutive automorphisms
$\sigma_{i}$ of compact exceptional simple Lie algebras are as specified in \cite[Table 1]{Huang-Yu}.

Write $\omega_{m}=e^{\frac{2\pi i}{m}}$, which is a primitive $m$-th root of unity.

\smallskip

\noindent{\it Acknowledgements.} A part of this work was done when the author visited MPI Bonn in the summer of 2016 and
a draft was written when the author visited National University of Singapore in January 2018. The author would like to
thank both institutions for their support and hospitality. I would like to thank the referees for providing useful
references and giving very helpful comments. This research is partially supported by the NSFC Grant 11971036.

%%%%%%%%%%%%%%%%%%%%%%%%%%%%%%%%%%%%%%%%%%%%%%%%%%%%%%%%%%%%%%%%%%%%%%%%%%%%%%
\section{Characterization of antipodal sets}\label{S:method}
%%%%%%%%%%%%%%%%%%%%%%%%%%%%%%%%%%%%%%%%%%%%%%%%%%%%%%%%%%%%%%%%%%%%%%%%%%%%%%%%

\begin{comment}
\subsection{A criterion for antipodal sets}\label{SS:antipodal1}

Let $G$ be a connected compact Lie group, $\theta$ an involutive automorphism of $G$, and $H$ a closed subgroup of $G$ with
$(G^{\theta})^{0}\subset H\subset G^{\theta}.$ Write $M=G/H$, which is a compact symmetric space. Let $o=eH$ denote
the {\it origin} in $M=G/H$. For any $x=gH\in M=G/H$, set $\phi(x)=g\theta(g)^{-1}.$ Apparently, $\phi(x)$ does not depend
on the choice of $g$. Thus, we have a well-defined map $$\phi: M=G/H\rightarrow G,$$ which is called the {\it Cartan
quadratic morphism} in the literature (cf. \cite{Chen}).

\begin{prop}\label{P:characterization1}
A nonempty subset $X$ of $M=G/H$ is an antipodal set if and only if $\phi(g_{2}^{-1}g_{1})\in H$ for any points
$x_{1}=g_{1}H\in M=G/H$ and $x_{2}=g_{2}H\in M=G/H.$
\end{prop}

\begin{proof}
As $L_{g_1}(o)=g_{1}H=x_1$, we have $$s_{x_1}=L_{g_1}s_{o}L_{g_1}^{-1}.$$ Then, $$s_{x_1}(x_2)=L_{g_1}s_{o}L_{g_1}^{-1}
(g_2H)=L_{g_1}s_{o}(g_{1}^{-1}g_2H)=L_{g_1}(\theta(g_{1}^{-1}g_{2})H)=g_{1}\theta(g_{1}^{-1}g_{2})H.$$ Thus, $x_2=
s_{x_1}(x_2)$ if and only if $g_{1}\theta(g_{1}^{-1}g_{2})H=g_{2}H.$ That is equivalent to $$\phi(g_{2}^{-1}g_{1})=
g_{2}^{-1}g_{1}\theta(g_{1}^{-1}g_{2})\in H.$$ This shows the conclusion of the proposition.
\end{proof}

If a nonempty subset $X$ of $M$ is an antipodal set, then so is $g\cdot X$ for any $g\in G$. Without loss of generality we
may assume that $o=eH\in X$.
\end{comment}

\subsection{Proof of Theorem \ref{T1}}\label{SS:antipodal2}

Let $G$ be a connected compact Lie group and $\theta$ be an involutive automorphism of it. Write $H=G^{\theta}$. Put
$M=G/G^{\theta}$, which is a compact symmetric space. Let $o=eG^{\theta}$ denote the {\it origin}. There is a left $G$
action on $G/G^{\theta}$ through $$L_{g}(g'G^{\theta})=g\cdot g'G^{\theta}=gg'G^{\theta},$$ and there is a $G$-action
on itself through $$g\ast g'=gg'\theta(g)^{-1}.$$ The Cartan quadratic morphism map $\phi$ is $G$-equivariant with
regard to these two actions, i.e., $$\phi(g\cdot x)=g\ast\phi(x),\ \forall g\in G,\forall x\in G/H.$$ It is clear
that $\phi$ is an imbedding. Apparently, the translation by any element in $G$ of an antipodal set in $G/G^{\theta}$
is still an antipodal set.

%\footnotetext{Alternatively, we could choose an element $c\in(Z_{G})^{\theta}$ and set $$\bar{G}_{c}=G\rtimes\langle
%\bar{\theta}\rangle,$$ where $\bar{\theta}^{2}=c$ and $\Ad(\bar{\theta})|_{G}=\theta.$ In this way,
%$\bar{G}_{c}$ may be a group which is more familiar to us than $\bar{G}$. Then, in Theorem \ref{T:characterization3}
%elow, $F_{2}(X)$ is an abelian subgroup of $\bar{G}_{c}$ generated by elements in $C_{\bar{\theta}}$, not
%necessarily an elementary abelian 2-subgroup.}

\begin{prop}\label{P:characterization2}
Let $X$ be a subset of $M$ containing the origin $o=eH\in M=G/H$. Then $X$ is an antipodal set if and only if
$\phi(x)\in H$ and $\phi(x)^{2}=1$ for any $x\in X$, and $\phi(x)$ commutes with $\phi(y)$ for any $x,y\in X$.
\end{prop}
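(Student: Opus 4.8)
The plan is to reduce everything to the criterion of Proposition \ref{P:characterization1}, which says that $X$ is antipodal if and only if $\phi(g_2^{-1}g_1)\in H$ for all $x_1=g_1H$ and $x_2=g_2H$ in $X$. Since here $H=G^{\theta}$, the first thing I would record is the identity $\theta(\phi(g))=\phi(g)^{-1}$, valid for every $g\in G$: indeed $\theta(g\theta(g)^{-1})=\theta(g)\theta^{2}(g)^{-1}=\theta(g)g^{-1}=\phi(g)^{-1}$ because $\theta^{2}=\id$. Consequently, for any $y\in G$ the element $\phi(y)$ lies in $G^{\theta}$ precisely when $\theta(\phi(y))=\phi(y)$, i.e. when $\phi(y)^{-1}=\phi(y)$, i.e. when $\phi(y)^{2}=e$. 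This converts the membership condition ``$\phi(g_2^{-1}g_1)\in H$'' into the purely order-theoretic condition ``$\phi(g_2^{-1}g_1)^{2}=e$''.

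The second step is a direct computation of $\phi(g_2^{-1}g_1)$ in terms of $\phi(x_1)$ and $\phi(x_2)$. Writing $a=\phi(g_1)$ and $b=\phi(g_2)$, and using $\theta(g_2)=b^{-1}g_2$, I would expand
\[
\phi(g_2^{-1}g_1)=g_2^{-1}g_1\,\theta(g_1)^{-1}\theta(g_2)=g_2^{-1}a\,\theta(g_2)=g_2^{-1}(ab^{-1})g_2,
\]
so that $\phi(g_2^{-1}g_1)$ is a conjugate of $\phi(x_1)\phi(x_2)^{-1}$. Since conjugation preserves the property of squaring to the identity, the condition $\phi(g_2^{-1}g_1)^{2}=e$ is equivalent to $(ab^{-1})^{2}=e$. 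I regard identifying this conjugacy relation as the crux of the argument: once the antipodal condition on a pair $(x_1,x_2)$ is seen to depend only on $\phi(x_1)$ and $\phi(x_2)$ through $\bigl(\phi(x_1)\phi(x_2)^{-1}\bigr)^{2}$, everything else is elementary group theory.

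Finally I would extract the three stated conditions. Specializing $x_2=o$ (so $b=\phi(o)=e$, which is legitimate since $o\in X$) gives $a^{2}=e$, i.e. $\phi(x)^{2}=e$, equivalently $\phi(x)\in H$, for every $x\in X$. Granting $a^{2}=b^{2}=e$, so that $a^{-1}=a$ and $b^{-1}=b$, the relation $(ab^{-1})^{2}=e$ becomes $(ab)^{2}=abab=e$, which rearranges to $ab=ba$; thus $\phi(x_1)$ and $\phi(x_2)$ commute. Conversely, if $\phi(x)^{2}=e$ for all $x$ and the $\phi(x)$ pairwise commute, then for any pair $(ab^{-1})^{2}=(ab)^{2}=a^{2}b^{2}=e$ by commutativity, hence $\phi(g_2^{-1}g_1)^{2}=e$, hence $\phi(g_2^{-1}g_1)\in H$, and Proposition \ref{P:characterization1} yields that $X$ is antipodal. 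This closes both implications. The only point requiring care is the bookkeeping with $\theta$ in the expansion of $\phi(g_2^{-1}g_1)$, since applying $\theta$ and then inverting reverses the order of the factors; beyond that no genuine difficulty arises.
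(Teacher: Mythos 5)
Your proof is correct and takes essentially the same route as the paper's: both use the identity $\theta(\phi(g))=\phi(g)^{-1}$ to convert membership in $H=G^{\theta}$ into the condition $\phi(\cdot)^{2}=e$, then reduce the pairwise antipodal condition to $\bigl(\phi(x_1)\phi(x_2)^{-1}\bigr)^{2}=e$ and finish with the elementary fact that two involutions commute exactly when their product squares to the identity. Your only (harmless) difference is that you make the conjugation $\phi(g_2^{-1}g_1)=g_2^{-1}\bigl(\phi(x_1)\phi(x_2)^{-1}\bigr)g_2$ explicit, where the paper passes between the two squared expressions with an unexplained ``equivalently.''
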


\begin{proof}
We first show that: $X$ is an antipodal set if and only if $\phi(g_{2}^{-1}g_{1})\in H$ for any two points $x_{1}=
g_{1}H\in G/H$ and $x_{2}=g_{2}H\in G/H$. Note that \[s_{o}(gH)=\theta(g)H,\ \forall g\in G.\] Since $L_{g_1}(o)=
g_{1}H=x_1$, we have $s_{x_1}=L_{g_1}s_{o}L_{g_1}^{-1}.$ Then, $$s_{x_1}(x_2)=L_{g_1}s_{o}L_{g_1}^{-1}(g_2H)=
L_{g_1}s_{o}(g_{1}^{-1}g_2H)=L_{g_1}(\theta(g_{1}^{-1}g_{2})H)=g_{1}\theta(g_{1}^{-1}g_{2})H.$$ Thus, $x_2=
s_{x_1}(x_2)$ if and only if $g_{1}\theta(g_{1}^{-1}g_{2})H=g_{2}H.$ This is equivalent to $$\phi(g_{2}^{-1}g_{1})
=g_{2}^{-1}g_{1}\theta(g_{1}^{-1}g_{2})\in H.$$

Necessarity. Suppose $X$ is an antipodal set. Write $x=gH\in X$. Taking $x_1=x$ and $x_2=o$, we get $\phi(x)=
g\theta(g)^{-1}\in H$. That is to say, $\theta(\phi(x))=\phi(x)$. We also have $$\theta(\phi(x))=
\theta(g\theta(g)^{-1})=\theta(g)g^{-1}=\phi(x)^{-1}.$$ Thus, $\phi(x)^{2}=1$. Taking $x=g_{1}H\in X$ and $y=g_{2}H
\in X$, we get $\phi(g_{2}^{-1}g_{1})\in H$. By the argument above this leads to $\phi(g_{2}^{-1}g_{1})^{2}=1.$
Equivalently, $$(g_{2}^{-1}g_{1}\theta(g_{1}^{-1})\theta(g_{2}))^{2}=1.$$ This is equivalent to
$(\phi(x)\phi(y)^{-1})^{2}=1$. Since $\phi(x)^2=\phi(y)^2=1$, it follows that: $\phi(x)$ commutes with $\phi(y)$.

Sufficiency. Suppose $\phi(x)\in H$ and $\phi(x)^{2}=1$ for any $x\in X$, and $\phi(x)$ commutes with $\phi(y)$ for
any $x,y\in X$. For any two points $x,y\in M$, write $x=g_{1}H$ and $y=g_{2}H$. Reverse to the above argument, by
the conditions of $\phi(x)^{2}=\phi(y)^{2}=1$ and $\phi(x)$ commutes with $\phi(y)$, one gets
$\phi(g_{2}^{-1}g_{1})^{2}=1$. Again by the above argument, this is equivalent to $\phi(g_{2}^{-1}g_{1})\in H$.
Then, $X$ is an antipodal set.
\end{proof}

\begin{proof}[Proof of Theorem \ref{T1}]
Assume that $X$ is a maximal antipodal set. By Proposition \ref{P:characterization2}, $F_{2}(X)$ is an elementary
abelian $2$-subgroup of $\bar{G}$ generated by elements in $C_{\bar{\theta}}$. Write $$X'=\{x\in M: \phi(x)\in
F_{2}(X)\cap C_{\bar{\theta}}\bar{\theta}^{-1}\}.$$ Then, $X\subset X'$ and $F_{2}(X')\subset F_{2}(X)$. By
Proposition \ref{P:characterization2}, $X'$ is an antipodal set. By the maximality of $X$, we get $X=X'$. By a
similar argument, one shows that $F_{2}(X)$ is a maximal element in the set of elementary abelian $2$-subgroups
of $\bar{G}$ which are generated by elements in $C_{\bar{\theta}}$. The converse is clear.
\end{proof}

Note that each elementary abelian 2-subgroup is contained in a maximal one. In practice, we first classify maximal
elementary abelian 2-subgroups of $\bar{G}$ containing $\bar{\theta}$ up to conjugacy. Take such an $F$ and let
$X$ be the set elements $x\in M$ such that $\phi(x)\in F$. Then, we remove such $X$ which are not maximal and
leave only the maximal ones. In this way, we get all maximal antipodal sets in $G/G^{\theta}$ up to conjugacy.

\subsection{Weyl group}\label{SS:Weyl}

Define a map $\psi: G/H\rightarrow\bar{G}$ by $$\psi(gH)=g\bar{\theta}g^{-1}.$$ Let $X$ be a subset of $M=G/H$,
not necessarily contain the origin. Put $\psi(X)=\{\psi(x): x\in X\}.$ By Proposition \ref{P:characterization2},
one can show that $X$ is an antipodal set in $M$ if and only if $\psi(X)$ generates an elementary abelian
2-subgroup of $G$. Let it be still denote by $F_{2}(X)$. Set \[N_{G}(X)=\{g\in G: g\cdot X=X\},\] \[Z_{G}(X)=
\{g\in G:g\cdot x=x,\ \forall x\in X\}\] and $W_{G}(X)=N_{G}(X)/Z_{G}(X)$. Apparently, the conjugation action
of any $g\in N_{G}(X)$ on $G$ stabilizes $F_{2}(X)$, and the inducing action on $F_{2}(X)$ is trivial if and
only if $g\in Z_{G}(X)$. Thus, we have an injective homomorphism $W_{G}(X)\hookrightarrow W_{G}(F_{2}(X))$.
It is clear that \[W_{G}(X)=\{w\in W_{G}(F_{2}(X)): w\cdot \psi(X)=\psi(X)\}.\]

\begin{prop}
If $X$ is a maximal antipodal set in $M$, then $W_{G}(X)=W_{G}(F_{2}(X))$.
\end{prop}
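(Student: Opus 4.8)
The plan is to upgrade the inclusion $W_{G}(X)\subseteq W_{G}(F_{2}(X))$ furnished by the injective homomorphism already exhibited, using the characterization $W_{G}(X)=\{w\in W_{G}(F_{2}(X)): w\cdot\psi(X)=\psi(X)\}$. Thus it suffices to show that \emph{every} $w\in W_{G}(F_{2}(X))$ stabilizes the subset $\psi(X)\subset F_{2}(X)$. Since the left-translation action $g\cdot X$ on antipodal sets preserves maximality and, via $\psi(g\cdot x)=g\psi(x)g^{-1}$, carries $W_{G}(X)$, $F_{2}(X)$ and $W_{G}(F_{2}(X))$ to their conjugates by $g$, the asserted equality is invariant under this action. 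Hence I may first translate $X$ so that it contains the origin $o=eH$, and assume $o\in X$ throughout; in particular $\overline{\theta}=\psi(o)\in F_{2}(X)$.

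The key step is the identity $\psi(X)=F_{2}(X)\cap C_{\overline{\theta}}$. To prove it I would first record that $\psi\colon M\to\overline{G}$ is a bijection onto $C_{\overline{\theta}}$: it lands in $C_{\overline{\theta}}$ by definition, every $g\overline{\theta}g^{-1}$ is hit, and $\psi(g_{1}H)=\psi(g_{2}H)$ forces $g_{2}^{-1}g_{1}\in Z_{G}(\overline{\theta})=G^{\theta}=H$. Next I rewrite the defining condition of $X$ from Theorem \ref{T:characterization3}: since $\phi(gH)=(g\overline{\theta}g^{-1})\overline{\theta}^{-1}=\psi(gH)\overline{\theta}^{-1}$ and $\overline{\theta}\in F_{2}(X)$, the membership $\phi(x)\in F_{2}(X)$ is equivalent to $\psi(x)\in F_{2}(X)$, while the condition $\phi(x)\in C_{\overline{\theta}}\overline{\theta}^{-1}$ is automatic because $\psi(x)\in C_{\overline{\theta}}$ always. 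Hence (\ref{Eq:X-F2}) reduces to $X=\{x\in M:\psi(x)\in F_{2}(X)\}$, and applying the bijection $\psi$ gives $\psi(X)=\psi(M)\cap F_{2}(X)=C_{\overline{\theta}}\cap F_{2}(X)$, as claimed. This is exactly where the maximality of $X$ enters: for a general antipodal set one only obtains the inclusion $\psi(X)\subseteq F_{2}(X)\cap C_{\overline{\theta}}$.

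Finally I would conclude. Let $w\in W_{G}(F_{2}(X))$ be represented by $g\in N_{G}(F_{2}(X))$, so that $w$ acts on $F_{2}(X)$ by conjugation by $g$. Then $g$ preserves $F_{2}(X)$ by the normalizer condition, and it also preserves $C_{\overline{\theta}}$, since conjugation by any element of $G$ merely permutes the single conjugacy class $C_{\overline{\theta}}=\{h\overline{\theta}h^{-1}:h\in G\}$. Therefore
\[
w\cdot\psi(X)=g\bigl(F_{2}(X)\cap C_{\overline{\theta}}\bigr)g^{-1}=\bigl(gF_{2}(X)g^{-1}\bigr)\cap\bigl(gC_{\overline{\theta}}g^{-1}\bigr)=F_{2}(X)\cap C_{\overline{\theta}}=\psi(X).
\]
By the characterization of $W_{G}(X)$ this shows $w\in W_{G}(X)$, giving $W_{G}(F_{2}(X))\subseteq W_{G}(X)$ and hence equality. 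I expect the main obstacle to be the second paragraph, namely pinning down the identity $\psi(X)=F_{2}(X)\cap C_{\overline{\theta}}$: the shift by $\overline{\theta}^{-1}$ relating $\phi$ and $\psi$ must be bookkept correctly, and one must invoke the maximality clause of Theorem \ref{T:characterization3} to obtain equality rather than mere containment. Once that identity is in hand, the stabilization of $\psi(X)$ by $W_{G}(F_{2}(X))$ is formal.
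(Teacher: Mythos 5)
Your proposal is correct and follows essentially the same route as the paper: both arguments hinge on establishing the identity $\psi(X)=F_{2}(X)\cap C_{\overline{\theta}}$ from the maximality of $X$, and then noting that every $w\in W_{G}(F_{2}(X))$ stabilizes this intersection because conjugation preserves the conjugacy class $C_{\overline{\theta}}$, so the characterization $W_{G}(X)=\{w\in W_{G}(F_{2}(X)):w\cdot\psi(X)=\psi(X)\}$ yields equality. The only cosmetic difference is that you derive the key identity from the maximality clause of Theorem \ref{T:characterization3} after translating $X$ to contain the origin, whereas the paper obtains it directly by setting $X'=\{x\in M:\psi(x)\in F_{2}(X)\}$, checking that $X'$ is antipodal, and invoking maximality of $X$.
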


\begin{proof}
Write $X'=\{x\in M: \psi(x)\in F_{2}(X)\}$. Then, $X'\supset X$. Using Proposition \ref{P:characterization2} one
can show that $X'$ is an antipodal set. By the maximality of $X$, we get $X=X'$. Sine the conjugation action of
each $w\in W_{G}(F_{2}(X))$ on $F_{2}(X)$ preserves conjugacy classes, it stabilizes $C_{\bar{\theta}}
\cap F_{2}(X)=\psi(X')=\psi(X)$. Hence, $$W_{G}(X)=\{w\in W_{G}(F_{2}(X)): w\cdot \psi(X)=\psi(X)\}=
W_{G}(F_{2}(X)).$$
\end{proof}

\subsection{Irreducible compact symmetric spaces of adjoint type}\label{SS:antipodal3}

Now assume that $G$ is a connected compact simple Lie group of {\it adjoint type}. Let $\fru_0$ be the Lie algebra
of $G$, which is a compact simple Lie algebra. Then, $G\cong\Int(\fru_0)$. For simplicity we identify $G$ with
$\Int(\fru_0)$, and regard $\theta$ as an element of $\Aut(\fru_0)$ which acts on $G=\Int(\fru_0)$ by conjugation.
Divide the discussion into two cases: (i)$\theta$ is an inner automorphism; (ii)$\theta$ is an outer automorphism.
In the first case, $\theta\in\Int(\fru_0)=G$ and $\bar{\theta}\theta^{-1}$ is a central element of $\bar{G}$. Thus, $\bar{G}=G\times\langle\bar{\theta}\theta^{-1}\rangle.$ Let $\pi:\bar{G}\rightarrow\Int(\fru_0)=G$ be the adjoint
homomorphism. Then $\pi|_{G}=\id$ and $\ker\pi=\langle\bar{\theta}\theta^{-1}\rangle$. Write \[C_{\theta}=
\{g\theta g^{-1}: g\in\Int(\fru_0)\}\subset\Int(\fru_0).\] Let $F(X)=p(F_{2}(X))$. Then $\theta\in F(X)$ and
$F(X)$ is an elementary abelian 2-subgroup of $\Int(\fru_0)$ generated by elements in $C_{\theta}$.
In the second case, $\theta\in\Aut(\fru_0)-\Int(\fru_0)$. We could identify $\bar{\theta}$ with $\theta\in
\Aut(\fru_0)$ and regard $\bar{G}$ as a subgroup of $\Aut(\fru_{0}).$ Let $F(X)=F_{2}(X)$. Then $\theta\in F(X)$
and $F(X)$ is generated by elements in $$C_{\theta}=\{g\theta g^{-1}:g\in\Int(\fru_0)\}\subset\Aut(\fru_0).$$

The following theorem follows from Theorem \ref{T1} directly.

\begin{theorem}\label{T:X-F}
Let $X$ be a subset of $M=\Int(\fru_0)/\Int(\fru_0)^{\theta}$ containing the origin. Then, $X$ is a maximal
antipodal set if and only if $F(X)$ is a maximal element in the set of elementary abelian 2-subgroups of
$\Aut(\fru_0)$ generated by elements in $C_{\theta}$ and \[X=\{gH:g\theta g^{-1}\in F(X)\}.\] %\label{Eq:X-F}
%\begin{equation}\label{Eq:F2-F} F_{2}(X)=\langle\{g\bar{\theta}g^{-1}: g\in G,g\theta g^{-1}\in F(X)\}\rangle,
%\end{equation}
%\begin{equation}\label{Eq:F1-F}F_{1}(X)=\langle\{g\bar{\theta}g^{-1}\bar{\theta}^{-1}:g\in G,g\theta g^{-1}
%\in F(X)\}\rangle,\end{equation}

%Conversely, if $F(X)$ is a maximal element in the set of elementary abelian 2-subgroups of $\Aut(\fru_0)$ which
%contain $\theta$ and are generated by elements in $C_{\theta}$, then $X=\{gH:g\theta g^{-1}\in F(X)\}$ is a maximal
%antipodal set in $M$.
\end{theorem}

\begin{comment}
\begin{proof}
By Theorem \ref{T1}, $F(X)$ is an elementary abelian 2-subgroup of $\Aut(\fru_0)$ generated
by elements in $C_{\theta}$. Other parts of the first statement follow from Equation (\ref{Eq:F2-F}).
Precisely to say, by Theorem \ref{T1}, $F_{2}(X)$ is a maximal element in the set of
elementary abelian $2$-subgroups of $\bar{G}$ which are generated by elements in
$C_{\bar{\theta}}$. By Equation (\ref{Eq:F2-F}), $F_{2}(X)$ and $F(X)$ determine each other. Thus,
$F(X)$ is a maximal element in the set of elementary abelian 2-subgroups of $\Aut(\fru_0)$ which are
generated by elements in $C_{\theta}$. By the definitions of $F_{1}(X)$, $F_{2}(X)$, $F(X)$, and the
maximality of $X$, Equations (\ref{Eq:F1-F})-(\ref{Eq:X-F}) follow from Equation (\ref{Eq:F2-F}).

Now we show Equation (\ref{Eq:F2-F}). Write $$F'_2(X)=\langle\{g\bar{\theta}g^{-1}: g\in G, g
\theta g^{-1}\in F(X)\}\rangle.$$ By the definition of $F_{2}(X)$, $F(X)$ and $F'_{2}(X)$, we get
$F_{2}(X)\subset F'_{2}(X)$. Thus, $\bar{\theta}\in F'_{2}(X)$ as $\bar{\theta}\in F_{2}(X)$.
Then, $F'_{2}(X)$ is generated by elements in $C_{\bar{\theta}}$. By Theorem \ref{T1},
$F_{2}(X)$ is a maximal element in the set of elementary abelian $2$-subgroup of $\bar{G}$ which are
generated by elements in $C_{\bar{\theta}}$. Thus, $F_{2}(X)=F'_{2}(X)$.

The converse statement is clear.
\end{proof}
\end{comment}

\begin{remark}\label{R:X-F}
With Theorem \ref{T:X-F}, we can deduce the classification of maximal antipodal sets in
$\Int(\fru_0)/\Int(\fru_0)^{\theta}$ from the classification of elementary abelian 2-subgroups of $\Aut(\fru_0)$
given in \cite{Yu}. It is only a routine work, we omit the details here. Note that conjugacy classes of elements
of each elementary abelian 2-subgroup of $\Aut(\fru_0)$ are described well in \cite{Yu}.
\end{remark}

%%%%%%%%%%%%%%%%%%%%%%%%%%%%%%%%%%%%%%%%%%%%%%%%%%%%%%%%%%%%%%%%%%%%%%%%%%%%%%
\section{A precise list of irreducible compact symmetric spaces}\label{S:symmetric space}
%%%%%%%%%%%%%%%%%%%%%%%%%%%%%%%%%%%%%%%%%%%%%%%%%%%%%%%%%%%%%%%%%%%%%%%%%%%%%%%%

Analogous to Lie groups, we use coverings to define isogeny for symmetric spaces.

\begin{definition}\label{D:isogeny}
Two compact symmetric spaces $M_1,M_2$ are said to be isogenous if they admit isomorphic universal coverings.
\end{definition}

We define irreducible symmetric spaces as follows.

\begin{definition}\label{D:irreducible}
A compact symmetric space $M$ is said to be irreducible if there exists no positive-dimensional compact symmetric
spaces $M_1,M_2$ such that $M$ is isogenous to $M_{1}\times M_{2}$.
\end{definition}

The following theorem is from \cite{Loos1}, which is pointed out to the author by an anonymous referee.

\begin{theorem}[\cite{Loos1}, p. 145, Theorem 4.6]\label{T:standard-form}
Let $M$ be a compact symmetric space. Then there is a compact Lie group $G$ and an involutive automorphism
$\theta$ of $G$ such that $M\cong G/G^{\theta}$.
\end{theorem}

One can show that (for example, use Theorem \ref{T:standard-form}) any irreducible compact symmetric space $M$ is
isomorphic to one of the following: (i)$S^{1}$; (ii)a compact simple Lie group; (iii)$G/G^{\theta}$ with $G$ a compact
simple Lie group and $\theta$ an involutive automorphism of it. Compact symmetric spaces in cases (i)-(ii) are said
to be of {\it group form}.

\begin{definition}\label{D:semisimple}
Let $M$ be a compact symmetric space. We call $M$ semisimple if its fundamental group $\pi_{1}(M)$ is finite; we call
$M$ simply-connected if $\pi_{1}(M)=1$; we call $M$ of adjoint type if there is no proper Riemannian covering
$M\rightarrow M'$ for $M'$ another compact symmetric space.
\end{definition}

In this section we give an explicit list of irreducible compact symmetric spaces that are not of group form by using
Theorem \ref{T:standard-form} and calculating symmetric subgroups $G^{\theta}$ (cf. \cite[Table 2, p. 408]{Huang-Yu}).
Recall that simply-connected compact symmetric spaces are classified by \'Elie Cartan and can be found in the
classical textbooks like \cite{Helgason}, \cite{Loos2}, \cite{Wolf}. The description and construction of non-simply
connected compact symmetric spaces are given in some excellent monographs (cf. \cite[Thm. 4.5, p. 103]{Borel},
\cite[Thm. 9.1, p. 326]{Helgason}, \cite[Proposition 2.4, p. 68-69]{Loos2}, \cite[Thm. 8.3.11, p. 244]{Wolf}).

\smallskip

\noindent\textbf{1, Grassmannians.} Put $c=e_{1}\dots e_{n}\in\Spin(n)$ and $L_{2n}=\frac{1+e_{1}e_{2n+1}}{\sqrt{2}}
\cdots\frac{1+e_{2n}e_{4n}}{\sqrt{2}}\in\Spin(4n)$. Any irreducible compact symmetric space $M$ which is isogenous to
a (real, complex or quaternion) Grassmannian is isomorphic to $G/G^{\theta}$ for some $(G,\theta)$ as in the following
list: \begin{enumerate}
\item[(i)] adjoint type: $G=\PSU(p+q)$, $\PSO(p+q)$ or $\PSp(p+q)$ ($q\geq p\geq 1$), $\theta=\Ad(I_{p,q})$.
\item[(ii)] $G=\SU(2p)$ ($p\geq 1$), $\theta=\Ad(I_{p,p})$, and $G^{\theta}=S(U(p)\times\U(p))$.
\item[(iii)] $G=\Sp(2p)$ ($p\geq 1$), $\theta=\Ad(I_{p,p})$, and $G^{\theta}\cong\Sp(p)\times\Sp(p)$.
\item[(iv)] $G=\SO(2p)$ ($p\geq 4$), $\theta=\Ad(I_{p,p})$, and $G^{\theta}=S(\O(p)\times\O(p))$.
\item[(v)] $G=\Spin(p+q)$ ($q\geq p\geq 1$), $\theta=\Ad(e_{1}\dots e_{p})$, $G^{\theta}=\Spin(p)\cdot \Spin(q))$.
\item[(vi)] $G=\Spin(4n)/\langle c\rangle$ ($n\geq 2$), $\theta=\Ad(e_{1}\dots e_{2n})$, and $G^{\theta}=((\Spin(2n)
\!\cdot\!\Spin(2n))\!\rtimes\!\langle\!L_{2n}\!\rangle)/\langle c\rangle$.
\end{enumerate}

\smallskip

\noindent\textbf{2, Types AI and AII.} Write $G_{n,m}=\SU(n)/\langle\omega_{m}I\rangle$ for any integer $m|n$. Put $J_{k}=\left(\begin{array}{cc}0_{k}&I_{k}\\-I_{k}&0_{k}\\\end{array}\right)$. Let $\tau$ be the complex conjugation on
$\SU(n)$ (and on $G_{n,m}$). When $n$ is even, let $\tau'=\tau\circ\Ad(J_{n/2})$. Any irreducible compact symmetric
space $M$ which is of type AI or AII in Cartan's notation is isomorphic to $G_{n,m}/G_{n,m}^{\tau}$ ($m|n$) or
$G_{n,m}/G_{n,m}^{\tau'}$ ($m|n$ and $n$ is even).
The isomorphism types of the groups $G_{n,m}^{\tau}$, $G_{n,m}^{\tau'}$ are as follows: \begin{enumerate}
\item[(1)] If $m$ is odd, then $G_{n,m}^{\tau}\cong\SO(n)$ and $G_{n,m}^{\tau'}\cong\Sp(n/2)$ (in case $n$ is even).
\item[(2)] If $m$ and $\frac{n}{m}$ are both even, then $G_{n,m}^{\tau}\cong\PSO(n)\times\mathbb{Z}/2\mathbb{Z}$ and
$G_{n,m}^{\tau'}\cong\PSp(n/2)\times\mathbb{Z}/2\mathbb{Z}$.
\item[(3)] If $m$ is even and $\frac{n}{m}$ is odd, then $G_{n,m}^{\tau}\cong\PO(n)$ and $G_{n,m}^{\tau'}\cong\PSp(n/2)$.
\end{enumerate}

\smallskip

\noindent\textbf{3, Types CI and DIII.} Any irreducible compact symmetric space which is of type CI or DIII in
Cartan's notation is isomorphic to $G/G^{\theta}$ for some $(G,\theta)$ as in the following list:
\begin{enumerate}
\item[(i)] adjoint type: $G=\PSp(n)$ ($n\geq 1$), $\theta=\Ad(\mathbf{i}I)$.
\item[(ii)] $G=\Sp(n)$ ($n\geq 1$), $\theta=\Ad(\mathbf{i}I)$, and $G^{\theta}=\U(n)$.
\item[(iii)] adjoint type: $G=\PSO(2n)$ ($n\geq 3$), $\theta=\Ad(J_{n})$.
\item[(iv)] $G=\SO(2n)$ ($n\geq 3$), $\theta=\Ad(J_{n})$, and $G^{\theta}=\U(n)$.
\end{enumerate}

\smallskip

\noindent\textbf{4, Irreducible compact symmetric spaces of exceptional type.} We call an irreducible compact symmetric
space $M$ of {\it exceptional type} if the neutral subgroup of its isometry group is a compact exceptional simple Lie
group. Any irreducible compact symmetric space of exceptional type is isomorphic to $G/G^{\theta}$ for some $(G,\theta)$
as in the following list:
\begin{enumerate}
\item[(i)] adjoint type: when $G$ is a connected compact simple Lie group of adjoint type, and $\theta$ is an involutive
automorphism of $G$.
\item[(ii)] $G=\E_{6}^{sc}$, $\theta\sim\sigma_3$ or $\sigma_4$ as in \cite[Table 1]{Huang-Yu}, $G^{\sigma_{3}}\cong\F_{4}$
and $G^{\sigma_{4}}\cong\PSp(4)$.
\item[(iii)] $G=\E_{7}^{sc}$, $\theta\sim\sigma_2$ or $\sigma_3$ as in \cite[Table 1]{Huang-Yu}, $G^{\sigma_{2}}\cong
(\E_{6}^{sc}\times\U(1))/\langle(c,1)\rangle$ (where $c$ is a nontrivial central element of $\E_{6}^{sc}$) and $G^{\sigma_{3}}\cong\SU(8)/\langle-I\rangle$.
\end{enumerate}

\section{Explicit classification of maximal antipodal sets}\label{S:classification}

In this section we classify maximal antipodal sets in irreducible compact symmetric spaces.

\subsection{Irreducible compact symmetric spaces of group form}\label{SS:group}

Let $M=G$ be an irreducible compact symmetric space of group form. Then, either $G\cong\U(1)$, or $G$ is a compact
simple Lie group. In this case, the geodesic symmetry $s_{x}$ acts by $s_{x}(y)=xy^{-1}x$ ($\forall x,y\in G$). Let
$X$ be a subset of $M$ containing the origin. It is clear that $X$ is a maximal antipodal set if and only if it
is a maximal elementary abelian $2$-subgroup of $G$. When $G=\U(1)$, then $X=\{\pm{1}\}$. When $G$ is of adjoint
type, maximal elementary abelian $2$-subgroups of $G$ are classified in \cite{Griess} and \cite{Yu}. The other
connected compact simple Lie groups fall into the following list:
\begin{enumerate}
\item[(i)] $\SU(n)/\langle e^{\frac{2\pi i}{m}}I\rangle$ ($m|n$, $m\neq n$).
\item[(ii)] $\Sp(n)$ ($n\geq 2$).
\item[(iii)] $\Spin(n)$ ($n\geq 7$).
\item[(iv)] $\SO(n)$ ($n\geq 8$, even).
\item[(v)] $\Spin(4m)/\langle c\rangle$ ($m\geq 2$).
\item[(vi)] $\E_{6}^{sc}$.
\item[(vii)] $\E_{7}^{sc}$.
\end{enumerate}

In item (i), let $G=\SU(n)/\langle e^{\frac{2\pi i}{m}}I\rangle$. When $m$ is odd, any maximal elementary abelian
2-subgroup is conjugate to the subgroup consisting of diagonal matrices with entries $\pm{1}$; when $m$ is even,
the map $X\mapsto\pi(X)$ with $\pi$ the projection $G\rightarrow\PSU(n)$ gives a bijection between conjugacy classes
of maximal elementary abelian 2-subgroups in $G$ and that in $\PSU(n)$. The latter is classified in
\cite[Proposition 2.4]{Yu}.

In item (ii) or item (iv), there is a unique conjugacy class of maximal elementary abelian 2-subgroups, i.e., those
conjugate to the subgroup consisting of diagonal matrices with entries $\pm{1}$.

In item (vi), due to $Z(\E_{6}^{sc})\cong\mathbb{Z}/3\mathbb{Z}$ is of odd degree, the map $X\mapsto\pi(X)$ with $\pi$
the projection $\E_{6}^{sc}\rightarrow\E_{6}^{ad}$ gives a bijection between conjugacy classes of maximal elementary
abelian 2-subgroups in $\E_{6}^{sc}$ and that in $\E_{6}^{ad}$. There are two conjugacy classes, corresponding to the
subgroups $F'_{2,3}$ and $F'_{0,1,0,2}$ in \cite[Pages 272-273]{Yu}.

In item (vii), $X\sim\pi^{-1}(X')$ with $\pi$ the projection $\E_{7}^{sc}\rightarrow\E_{7}^{ad}$, and $X'=F'''_{0,3}$
(rank 6) or $F''_{2}$ (rank 5) in \cite[Page 284]{Yu}.

We do not know yet a complete classification of maximal elementary 2-subgroups for groups in item (iii) and item (v).

\subsection{Grassmannians}\label{SS:Grassmannian-classification}

In this subsection we classify maximal antipodal sets in an irreducible compact symmetric space which is isogenous to
a Grassmannian. As stated in Section \ref{S:symmetric space}, there are six cases to consider: item (i) is the adjoint
type case, which is treated in Remark \ref{R:X-F}; for item (v) and item (vi), we do not have a full classification yet.
Below we treat items (ii)-(iv).

\begin{example}\label{E:Grass1}
Let $M=\SU(2p)/S(\U(p)\times\U(p))$ and let $X\subset M$ be a maximal antipodal set containing the origin $o$.
Write $G=\SU(2p)$. Define $\theta\in\Aut(G)$ by $$\theta(g)=I_{p,p}gI_{p,p}^{-1},\ \forall g\in G.$$ Then,
$M=G/G^{\theta}$. Set $\bar{G}=G\rtimes\langle\bar{\theta}\rangle$, where $\bar{\theta}^{2}=1$ and
$\Ad(\bar{\theta})|_{G}=\theta$.

When $p$ is odd, we may identity $\bar{G}$ with $\SU^{\pm{}}(2p)$ and identify $\bar{\theta}$ with $I_{p,p}$.
Then, $F_{2}(X)$ is diagonalizable. Without loss of generality we assume that $F_{2}(X)$ is contained in
the subgroup $F$ of $\bar{G}=\SU^{\pm{}}(2p)$ consisting of diagonal matrices with entries $\pm{1}$. Then,
$$|X|=|F_{2}(X)\cap C_{\bar{\theta}}|=|F\cap C_{\bar{\theta}}|=\binom{2p}{p}.$$

When $p$ is even, we may identify $\theta$ with $I_{p,p}$. Then, $\bar{G}=G\times\langle\bar{\theta}
\theta^{-1}\rangle$, $F_{2}(X)=F_{1}(X)\times\langle\bar{\theta}\rangle$ and $F_{2}(X)$ is diagonalizable.
Without loss of generality we assume that $F_{1}(X)$ is contained in the subgroup $F$ of $\SU(2p)$ consisting
of diagonal matrices with entries $\pm{1}$. Then, $$|X|=|F_{2}(X)\cap C_{\bar{\theta}}|=|F\cap C_{\theta}|
=\binom{2p}{p}.$$
\end{example}

\begin{example}\label{E:Grassmannian-quaternion}
When $M=\Sp(2p)/(\Sp(p)\times\Sp(p))$, the classification proceeds the same as in Example \ref{E:Grass1}: there
is a unique maximal antipodal set $X$ in $M$ up to conjugacy, and $|X|=\binom{2p}{p}$.
\end{example}

\begin{example}\label{E:Grassmannian-real}
Let $M=\SO(2p)/S(\O(p)\times\O(p))$ ($p\geq 3$), the classification proceeds the same as in Example
\ref{E:Grass1}: there is a unique maximal antipodal set $X$ in $M$ up to conjugacy, and $|X|=\binom{2p}{p}$.
\end{example}

\subsection{Types AI and AII}\label{SS:A1-2-classification}

As stated in $\S$ \ref{S:symmetric space}, any irreducible compact symmetric space $M$ which is of type AI or AII
in Cartan's notation is isomorphic to $G/G^{\theta}$ where $G=G_{n,m}$ and $\theta=\tau$ or $\tau'$. When $m=n$,
$M$ is of adjoint type and it is treated in Remark \ref{R:X-F}. According to \cite[Propositions 2.12 and 2.16]{Yu},
there are $k+1$ conjugacy classes of maximal elementary abelian 2-subgroups in $\PO(n)=G^{\tau}$ (or $\PSp(n/2)=
G^{\tau'}$), where $k$ is the 2-power index of $n$ (or $\frac{n}{2}$).

When $m=1$, we have $M\cong\SU(n)/\SO(n)$ or $\SU(n)/\Sp(n/2)$.

\begin{example}\label{E:A1}
Let $M=\SU(n)/\SO(n)$ and let $X\subset M$ be a maximal antipodal set containing the origin $o$. Write $G=\SU(n)$
and $\theta=\tau\in\Aut(G)$. Then, $M=G/G^{\theta}$. Set $\bar{G}=G\rtimes\langle\bar{\theta}\rangle$, where
$\bar{\theta}^{2}=1$ and $\Ad(\bar{\theta})|_{G}=\theta$. Taking similar study as in Example \ref{E:Grass1}, we
have $F_{1}(X)\subset G^{\theta}=\SO(n)$. Then, $F_{1}(X)$ is conjugate to the subgroup of $\SO(n)$ consisting
of diagonal matrices with entries $\pm{1}$ and $$|X|=|F_{2}(X)\cap C_{\bar{\theta}}|=|F_{1}(X)|=2^{n-1}.$$
\end{example}

\begin{example}\label{E:A2}
When $M=\SU(n)/\Sp(\frac{n}{2})$ ($n\geq 4$, even), the classification is along the same line as in Example
\ref{E:A1} by replacing $\tau$, $\SO(n)$ with $\tau'$, $\Sp(\frac{n}{2})$ respectively. The result is: there is
a unique maximal antipodal set $X$ in $M$ up to conjugacy and $|X|=2^{\frac{n}{2}-1}$.
\end{example}

In general, when $m$ is odd, the classification is the same as in the case of $m=1$. When $n/m$ is odd, the
classification is the same as in the adjoint type case. When $m$ and $n/m$ are both even, we have $G^{\tau}\cong
\PSO(n)\times\mathbb{Z}/2\mathbb{Z}$ and $G^{\tau'}\cong\PSp(n/2)\times\mathbb{Z}/2\mathbb{Z}$. Using the
classification of elementary abelian 2-subgroups of $\PSO(n)$ and of $\PSp(\frac{n}{2})$ given in \cite{Yu},
one can classify maximal antipodal sets.

\subsection{Types CI and DIII}\label{SS:C1-D3-classification}

Let $M$ be a compact symmetric space of type CI or DIII. Item (i) as listed in $\S$ \ref{S:symmetric space}
is treated in Remark \ref{R:X-F}. We treat items (ii)-(iii) below.

\begin{example}\label{E:D3}
Let $M=\SO(2n)/\U(n)$ and let $X\subset M$ be a maximal antipodal set containing the origin $o$. Write $G=\SO(2n)$
and $\theta=\Ad(J_{n})\in\Aut(G)$. Then, $M=G/G^{\theta}$. Set $\bar{G}=G\rtimes\langle\bar{\theta}\rangle$,
where $\bar{\theta}^{2}=1$ and $\Ad(\bar{\theta})|_{G}=\theta$. Taking similar study as in Example \ref{E:Grass1},
we have $F_{1}(X)\subset G^{\theta}=\U(n)$. Then, $F_{1}(X)$ is conjugate to the subgroup of $\U(n)$ consisting
of diagonal matrices with entries $\pm{1}$ and with determinant $1$ (this condition is forced by $F_{2}(X)$ is
generated by elements in $C_{\bar{\theta}}$) and $$|X|=|F_{2}(X)\cap C_{\bar{\theta}}|=|F_{1}(X)|=2^{n-1}.$$
\end{example}

\begin{example}\label{E:C1}
Let $M=\Sp(n)/\U(n)$ and let $X\subset M$ be a maximal antipodal set containing the origin $o$. The classification
is similar to Example \ref{E:D3}: there is a unique maximal antipodal set $X$ in $M$ up to conjugacy and $|X|=2^{n}$.
\end{example}

\subsection{Exceptional type}\label{SS:exceptional-classification}

Let $M$ be an irreducible compact symmetric space of exceptional type. Item (i) as listed in $\S$ \ref{S:symmetric space}
is treated in Remark \ref{R:X-F}. We treat items (ii)-(iii) below.

\begin{example}\label{E:E6}
Let $M=\E_{6}^{sc}/(\E_{6}^{sc})^{\theta}$ for $\theta$ an outer involution, and let $X\subset M$ be a maximal
antipodal set containing the origin $o$. Write $G=\E_{6}^{sc}$. Set $\bar{G}=G\rtimes\langle\bar{\theta}\rangle$,
where $\bar{\theta}^{2}=1$ and $\Ad(\bar{\theta})|_{G}=\theta$. Let $\pi:\bar{G}\rightarrow\Aut(\mathfrak{e}_{6})$
be the adjoint homomorphism.

When $\theta\sim\sigma_{3}$, by \cite[Proposition 6.3]{Yu} one shows that $\pi(F_{2}(X))$ is conjugate to the
subgroup $F_{2,0}$ of $\F_4=G^{\theta}$. Then, $|X|=4$.

When $\theta\sim\sigma_{4}$ and $\pi(F_{2}(X))$ contains no element conjugate to $\sigma_{3}$, by
\cite[Proposition 6.5]{Yu} one shows that $\pi(F_{2}(X))$ is conjugate to the subgroup $F_{0,1,0,2}$ of
$\Aut(\fre_{6})$. Then, $|X|=64$. When $\theta\sim\sigma_{4}$ and $\pi(F_{2}(X))$ contains an element conjugate
to $\sigma_{3}$, by \cite[Proposition 6.3]{Yu} one shows that $\pi(F_{2}(X))$ is conjugate to the subgroup
$F_{2,3}$ of $\Aut(\fre_{6})$. Then, $|X|=2^{5}-2^{2}=28$.
\end{example}

\begin{example}\label{E:E7}
Let $M=\E_{7}^{sc}/(\E_{7}^{sc})^{\theta}$ for $\theta\sim\sigma_2$ or $\theta\sim\sigma_3$, and let $X\subset M$
be a maximal antipodal set containing the origin $o$. Write $G=\E_{7}^{sc}$. Set $\bar{G}=G\rtimes\langle
\bar{\theta}\rangle$, where $\bar{\theta}^{2}=1$ and $\Ad(\bar{\theta})|_{G}=\theta$. Let $\pi:\bar{G}\rightarrow
\Aut(\mathfrak{e}_{7})$ be the adjoint homomorphism.

When $\theta\sim\sigma_{2}$, taking similar study as in Example \ref{E:Grass1} we have \[F_{1}(X)\subset G^{\theta}
\cong(\E_{6}\times\U(1))/\langle(c,e^{\frac{2\pi i}{3}})\rangle.\] Write $c=[(1,-1)]\in(\E_{6}\times\U(1))/\langle
(c,e^{\frac{2\pi i}{3}})\rangle$. As shown in \cite[\S 7.1]{Yu}, for any element $x\in F_{1}(X)$, $x\bar{\theta}
\in C_{\bar{\theta}}$ if and only if $x\sim 1$, $c$, $\tau_{2}$ or $c\tau_{2}$ in $(\E_{6}\times\U(1))/\langle
(c,e^{\frac{2\pi i}{3}})\rangle$. Then, $F_{1}(X)$ is of the form $F_{1}(X)=J\times\langle(1,-1)\rangle$, where $J$
is an elementary abelian 2-subgroup of $\E_6$. By \cite[Proposition 6.5]{Yu} one can show that $\pi(F_{2}(X))$ is
conjugate to $F'_{0,1,0,2}\subset\E_{7}^{ad}$. Counting conjugacy classes of elements in $F'_{0,1,0,2}$ we get
$|X|=2\times\frac{2^{6}-2^{3}}{2}=56$.

When $\theta\sim\sigma_{3}$, taking similar study as in Example \ref{E:Grass1} we have \[F_{1}(X)\subset G^{\theta}
\cong\SU(8)/\langle -I\rangle.\] For any element $x\in F_{1}(X)$, $x\bar{\theta}\in C_{\bar{\theta}}$
if and only if $x$ is conjugate to $[I]$, $[\mathbf{i}I]$, $[I_{4,4}]$ or $[\mathbf{i}I_{4,4}]$ in
$\SU(8)/\langle -I\rangle$. Choose a maximal elementary abelian 2-subgroup $F$ of $\PSU(8)$ containing the image
of projection $F$ of $F_{1}(X)$ in it. As in \cite[\S 2]{Yu}, it is associated with a multiplicative function
$m:F\times F\rightarrow\{\pm{1}\}$. Put $r=\rank\ker m+1$ and $s=\frac{1}{2}\rank(F/\ker m)$. Then, $r\cdot 2^{s}=8$.
Then, $(r,s)=(8,0)$, $(4,1)$ or $(1,3)$. When $(r,s)=(8,0)$, we have $|X|=72$; when $(r,s)=(4,1)$, we have
$|X|=2^{4}\cdot 3+2^{3}=56$; when $(r,s)=(1,3)$, we have $|X|=128$.
\end{example}

\section{Supplements}

\subsection{Characterization of polars}\label{S:anitipodal2}

Let $M$ be a compact symmetric space. Connected components of the fixed point set of the geodesic symmetry $s_{x}$
at a point $x\in M$ are called polars by Chen and Nagano and are classified in \cite{Chen-Nagano2} and \cite{Nagano}.
Now let $M=G/G^{\theta}$ for $G$ a connected compact simple Lie group and $\theta$ an involutive automorphism
of $G$. We remark here that results in \cite{Huang-Yu} also apply to classify $G^{\theta}$ orbits in the fixed
point set of $s_{o}$. When $G^{\theta}$ is connected, this is equivalent to the classification of polars.
In general, $\pi_{0}(G^{\theta})=(\mathbb{Z}/2\mathbb{Z})^{r}$ ($r=0,1,2$) and $r=2$ happens only when
$M=\PSO(8)/\PSO(8)^{[\Ad(I_{4,4})]}$ (\cite[Table 2, p. 408]{Huang-Yu}). Thus, a $G^{\theta}$ orbit is the union
of 1,2 or 4 polars. Set $\bar{G}=G\rtimes\langle\bar{\theta}\rangle$ where $\bar{\theta}^{2}=1$ and
$\Ad(\bar{\theta})|_{G}=\theta.$ Write $C_{\bar{\theta}}=\{g\bar{\theta}g^{-1}:g\in G\}.$ The classification is
based on the following lemma, which is easy and we omit the proof.

\begin{lemma}\label{L:characterization2'}
A point $x=gG^{\theta}\in G/G^{\theta}$ is in the fixed point set of $s_{o}$ if and only if $\phi(x)\in G^{\theta}$
and $\phi(x)^{2}=1$. The $G^{\theta}$ orbits in the fixed point set of $s_{o}$ are in one-to-one correspondence with
$G$ orbits of ordered pairs $(\theta_1,\theta_2)\in\bar{G}\times\bar{G}$ such that $\theta_1,\theta_2\in
C_{\bar{\theta}}$ and $\theta_1\theta_2=\theta_2\theta_1$.
\end{lemma}

When $G$ is of adjoint type, ordered pairs of commuting involutions in $\bar{G}$ are classified in \cite{Huang-Yu}.
When $G$ is not of adjoint type, the classification can be made by considering the projection $\pi: G\rightarrow
\Int(\fru_0)$ and using the classification in \cite{Huang-Yu}. For any $o\neq x=gG^{\theta}\in G/G^{\theta}$,
put $\theta_1=\bar{\theta}$ and $\theta_2=g\bar{\theta}g^{-1}$. Then, \[\Stab_{G^{\theta}}(x)=G^{\theta}\cap gG^{\theta}g^{-1}=Z_{G}(\langle\theta_{1},\theta_{2}\rangle).\] The group $\langle\theta_{1},\theta_{2}\rangle$
is a Klein four subgroup of $G$, the centralizers $Z_{G}(\langle\theta_{1},\theta_{2}\rangle)$ are calculated in
\cite[Table 6, p. 420]{Huang-Yu} when $G$ is of adjoint type. When $G$ is not of adjoint type, one can apply the
method in \cite{Huang-Yu} to calculate the centralizers $Z_{G}(\langle\theta_{1},\theta_{2}\rangle)$ as well.

\subsection{Some corrections to \cite{Yu}}

Here I would like to make several corrections to \cite{Yu}.
In \cite[p. 273, lines 7-9]{Yu}, the correct definition for the groups $F_{\epsilon,\delta,r,s}$ ($\epsilon+\delta\leq 1$,
$r+s\leq 2$) should be\[F_{\epsilon,\delta,r,s}=\left\{\begin{array}{cc}\langle x_{0},x_{1},\dots,x_{\epsilon+2\delta},
x_{3},\dots,x_{2+r+2s}\rangle\textrm{ if }(r,s)\neq(2,0)\\ \langle x_{0},x_{1},\dots,x_{\epsilon+2\delta},x_{3},x_{5}
\rangle\textrm{ if }(r,s)=(2,0).\\\end{array}\right.\] Accordingly, $F'_{\epsilon,\delta,r,s}$ ($\epsilon+\delta\leq 1$,
$r+s\leq 2$) should be defined by \[F'_{\epsilon,\delta,r,s}=\left\{\begin{array}{cc}\langle x_{1},\dots,x_{\epsilon+
2\delta},x_{3},\dots,x_{2+r+2s}\rangle\textrm{ if }(r,s)\neq(2,0)\\ \langle x_{1},\dots,x_{\epsilon+2\delta},x_{3},
x_{5}\rangle\textrm{ if }(r,s)=(2,0).\\\end{array}\right.\] I would like to thank Alastair Litterick, Heiko Dietrich,
Haian He for pointing out these two mistakes.

In \cite[p. 291, lines -1]{Yu}, it should be $C=\Gamma_{3}$. In \cite[p. 291, lines -12 - -11]{Yu}, it should be
\[F''_{r,0}=A^{r}\times B,\]  \[F''_{r,1}=A^{r}\times C,\] \[F''_{r,2}=A^{r}\times D,\] where $B,C,D$ are elementary
abelian 2-subgroups with rank equal to 1,2,3 respectively and each has a unique element conjugate to $\sigma_1$.

These mistakes do not affect the statement of any result in \cite{Yu}.

\subsection{Open cases}\label{SS:open}

In summary, the only irreducible compact symmetric spaces for which we do not have a complete classification
of maximal antipodal sets yet are in the following list:
\begin{enumerate}
\item[(i)] $M=\Spin(n)$ ($n\geq 7$).
\item[(ii)] $M=\Spin(4n)/\langle c\rangle$ ($n\geq 3$).
\item[(iii)] $M=\Spin(p+q)/\Spin(p)\cdot\Spin(q)$ ($p\geq q\geq 1$ and $p+q\geq 7$).
\item[(iv)] $M=G/G^{\theta}$ where $G=\Spin(4n)/\langle c\rangle$ and $\theta=\Ad(e_{1}e_{2}\dots e_{2n})$.
\end{enumerate}

For any $k\geq 1$, identify the $\mathbb{Z}/2\mathbb{Z}$-vector space $V_{k}=(\mathbb{Z}/2\mathbb{Z})^{k}$ with
the set of subsets of $\{1,\dots,k\}$ and denote by $e_{I}\in V_{k}$ for an element corresponding to a subset $I$
of $\{1,\dots,k\}$. Define an anti-symmetric form on $V_{k}$ by $(e_{I},e_{J})=|I\cap J|\pmod{2}$. Let $V'_{k}$
be the subspace of $e_{I}\in V_{k}$ such that $\sharp{I}$ is even. A sub-vector space $W$ of $V'_{k}$ is said to
be an isotropic subspace if $(e_{I},e_{J})=0$ for any $e_{I},e_{J}\in W$; an isotropic subspace $W$ of $V'_{k}$
is called a Lagrangian if it is not properly contained in any other isotropic subspace. Write $X_{k}$ for the
set of Lagrangians in $V'_{k}$ and write $X'_{k}$ for the subset of $X_{k}$ consisting of Lagrangians
$W\subset V'_{k}$ such that $|I|\neq 2$ for any $e_{I}\in W$. Then, both $X_{k}$ and $X'_{k}$ admit actions of
the permutation group $S_{k}$. Write $X_{k}/S_{k},X'_{k}/S_{k}$ for the corresponding orbit sets. For any
$W\in X_{k}$ (or $W\in X'_{k}$), write $[W]\in X_{k}/S_{k}$ (or $[W]\in X'_{k}/S_{k}$) for the $S_{k}$ orbit
containing $W$.

The following proposition says something for maximal antipodal sets in $\Spin(n)$.

\begin{prop}
Let $n\geq 1$. Then: \begin{enumerate}
\item[(1)]the cardinality of each maximal antipodal set in $\Spin(n)$ is equal to $2^{\lfloor\frac{n+2}{2}\rfloor}$;
\item[(2)]the orbit set of maximal antipodal sets in $\Spin(n)$ can be parametrized by the set $X_{n}/S_{n}$;
\item[(3)]there is a decomposition \[X_{n}/S_{n}\cong\bigsqcup_{0\leq r\leq\lfloor\frac{n}{2}\rfloor}X'_{n-2r}/S_{n-2r}.\]
\item[(4)]$X'_{k}=\emptyset$ if $k\in\{2,3,4,5,6\}$.
\end{enumerate}
\end{prop}

\begin{proof}[Sketch of proof]
We show a correspondence between maximal antipodal sets in $\Spin(n)$ and Lagrangians in $V'_{n}$.
Let $F$ be a maximal antipodal set in $\Spin(n)$. Without loss of generality we assume that $1\in F$. Then,
$F$ is a maximal elementary abelian 2-subgroup of $\Spin(n)$. Thus, $Z(\Spin(n))\subset F$. Consider the natural
projection $\pi:\Spin(n)\rightarrow\SO(n)$. Since any elementary abelian 2-subgroup of $\SO(n)$ is conjugate to
a diagonal one, we assume that $\pi(F)$ is contained in the subgroup $F'_0$ of diagonal matrices in $\SO(n)$.
Identify $F'_0$ with the $\mathbb{Z}/2\mathbb{Z}$-vector space $V'_{n}\subset V_{n}=(\mathbb{Z}/2\mathbb{Z})^{n}$,
and also the set of subsets $I$ of $\{1,\dots,n\}$ with $\sharp{I}$ even. Let $W\subset V'_{n}$ correspond to
$\pi(F)$. We have $[e_{I},e_{J}]=(-1)^{I\cap J}\in\Spin(n)$ (the repetition of the notation $e_{I}$ to mean
either an element in $V_{I}$ or an element in $\Spin(n)$ is cute, here $e_{I},e_{J}$ means elements in $\Spin(n)$)
for any two subsets $I,J$ of $\{1,\dots,n\}$ with $e_{I},e_{J}\in W$. Then, $F$ is a maximal elementary abelian
2-subgroup if and only if $W$ is a Langrangian. This shows the assertion (1).

The assertion (2) can be shown in an inductive way using two facts: (a)$\pi(e_{I})$ and $\pi(e_{J})$ are conjugate
in $\SO(n)$ if and only if $I$ and $J$ and in the same $S_{n}$ orbit; (b)for any set of elements $e_{I_{1}},\dots,
e_{I_{s}}$ of $F$, the centralizer of $\langle\pi(e_{I_{1}}),\dots,\pi(e_{I_{s}})\rangle$ in $\O(n)$ is
a product of $\O(n_{j})$ ($1\leq j\leq t$) where $\sum_{1\leq j\leq t}n_{j}=n$.

The assertion (3) is easy to show. The assertion (4) can be shown by a case by case verification.
\end{proof}

For items (ii)-(iv), I even don't know cardinalities of maximal antipodal sets except when $n$ or $\min\{p,q\}$
is small.

\end{document}